\documentclass{amsart}
\pdfoutput=1
\usepackage{mathrsfs}
\usepackage{MnSymbol}
\usepackage[mathscr]{eucal}
\usepackage{mathtools}
\usepackage{url, array}
\usepackage[T1]{fontenc}
\usepackage{color}
\usepackage{comment}
\usepackage{tikz-cd}
\usepackage[all]{xy}
\SelectTips {cm}{}

\newcommand{\up}[1]{\textup{\texttt{#1}}}
\DeclareMathOperator{\arr}{arr}
\DeclareMathOperator{\operads}{\up{Op}}
\DeclareMathOperator{\model}{\up{Model}}
\DeclareMathOperator{\cocomplete}{\up{coComplete}}
\DeclareMathOperator{\twist}{\up{Twist}}
\DeclareMathOperator{\chain}{\up{Ch}}
\DeclareMathOperator{\alg}{\up{-alg}}
\DeclareMathOperator{\coalg}{\up{-coalg}}

\newcommand{\fieldk}{\mathbf{k}}
\newcommand{\aug}{\epsilon}

\newcommand{\Cc}{\mathcal{C}}

\newcommand{\Dd}{\mathcal{D}}

\newcommand{\id}{\mathrm{id}}
\newcommand{\Oo}{\mathcal{O}}
\newcommand{\Pp}{\mathcal{P}}

\newcommand{\dg}{}
\newcommand{\suspend}{\mathscr{S}}
\newcommand{\desuspend}{\suspend^{-1}}
\newcommand{\shifted}{\suspend^{-1}}
\newcommand{\ass}{{\mathcal{A}ss}}
\newcommand{\com}{{\mathcal{C}om}}
\newcommand{\lie}{{\mathcal{L}ie}}

\newcommand{\shiftedcom}{\shifted\com}

\newtheorem{thm}{Theorem}[section]
\newtheorem{lemma}[thm]{Lemma}
\newtheorem{cor}[thm]{Corollary}
\newtheorem{prop}[thm]{Proposition}

\theoremstyle{definition}

\newtheorem*{summary*}{Summary}
\newtheorem{defi}[thm]{Definition}
\newtheorem{example}[thm]{Example}

\theoremstyle{remark}
\newtheorem{remark}[thm]{Remark}
\newtheorem*{remark*}{Remark}

\title{Model structures for coalgebras}
\author{Gabriel C. Drummond-Cole}
\thanks{This work was supported by IBS-R003-G1}
\address{Center for Geometry and Physics, Institute for Basic Science (IBS), Pohang 790-784, Republic of Korea}
\author{Joseph Hirsh}
\thanks{The second author was supported by NSF DMS-1304169.}
\address{Department of Mathematics, Massachusetts Institute of Technology (MIT), Cambridge 02139, United States of America}

\begin{document}
\begin{abstract}
Classically, there are two model category structures on coalgebras in the category of chain complexes over a field. In one, the weak equivalences are maps which induce an isomorphism on homology. In the other, the weak equivalences are maps which induce a weak equivalence of algebras under the cobar functor. We unify these two approaches, realizing them as the two extremes of a partially ordered set of model category structures on coalgebras over a cooperad satisfying mild conditions.
\end{abstract}
\maketitle
\section{Introduction}
A natural area of interest for an algebraic homotopy theorist is the homotopy theory of coalgebras. Several different ideas about the homotopy category of coalgebras (say, in some category of chain complexes) exist in the literature. 

Quillen~\cite{Quillen:RHT} gave a model structure on (a subcategory of) commutative coalgebras where the weak equivalences were created by the cobar functor to Lie algebras. Hinich~\cite{Hinich:DGCFS} extended the same structure to a larger category of commutative algebras and Lefevre-Hasegawa~\cite{LefevreHasegawa:SAIC} then extended these methods to (a subcategory of) coassociative coalgebras. More recently, Vallette~\cite{Vallette:HTHA} described these three model structures as examples of a much more general phenomenon related to Koszul duality. That is, a Koszul twisting morphism from a cooperad to a operad induces a bar-cobar adjunction between coalgebras over the cooperad and algebras over the operad. Vallette used this adjunction to lift the (well-known) model category structure on algebras over the operad to a model category structure on coalgebras over the cooperad. In the Vallette model structure, the weak equivalences are the morphisms which become weak equivalences of algebras under the cobar functor. The Vallette model structure provides a model category framework for infinity algebras ($A_\infty$, $L_\infty$, and so on), which can be realized as the fibrant-cofibrant objects in this model category.

On the other hand, if one is interested in the theory of derived coalgebras, it makes more sense to have the weak equivalences be the weak equivalences of chain complexes ({quasi-isomorphisms}) under the forgetful functor. This point of view has also been taken in the literature. Getzler and Goerss~\cite{GetzlerGoerss:MCSDGC} constructed such a model category structure for coassociative coalgebras. Aubry and Chataur~\cite{AubryChataur:CCCMC} proved the existence of such a model category structure for coalgebras over a quasi-cofree cooperad in chain complexes. Smith~\cite{Smith:MCCOC} provided such a structure for more general cooperads.\footnote{Smith works over a field or the integers, using \emph{homotopy equivalence} of chain complexes as his weak equivalences. Over a field this is the same as quasi-isomorphism; over the integers it is not and Smith's structure does not fit into our story.}
Hess and Shipley~\cite{HessShipley:HTCOC} established the existence of a model category structure of this sort in a much more general situation, that of coalgebras over a comonad, not necessarily in a category of chain complexes. Applied to the case where the comonad comes from a cooperad in a category of chain complexes, they give the most general version of this model structure for cooperads in bounded below chain complexes. 

When a cooperad is concentrated in arity one, it is the same thing as a counital coassociative coalgebra. A ``coalgebra'' over such a cooperad is then a comodule over that counital coalgebra. In this special case, the two classical choices of weak equivalences for the category of comodules give rise to what are sometimes called the derived categories of the second and first kind, respectively, which have a long history~\cite{HusemollerMooreStasheff:DHAHS, Positselski:TKDCKDCCC}. See Section~\ref{subsection:comodules} for an example specifically related to this case.

Porta asked a question on \url{mathoverflow.net}~\cite{Porta:MSCC} about model structures on coalgebras. In part, the question asked where the {\em Koszul} property was used in Vallette's construction. It asked whether instead some more general sort of twisting morphism between a cooperad and operad might induce a model category structure on coalgebras.

We answer this question in the affirmative, proving the existence of a model category structure on coalgebras over a cooperad equipped with a twisting morphism to some operad. In these model category structures, the weak equivalences and cofibrations are created by the cobar functor to the category of algebras over the operad. The twisting morphism is not assumed to be a Koszul morphism and the operad is not assumed to be dual to the cooperad.

This construction unifies the two main approaches to defining model category structures on coalgebras over cooperads. In fact, it yields a {\em diagram} of model category structures on the same underlying category of coalgebras. The initial object in this category, using the canonical twisting morphism, is the Vallette model category structure and the terminal object, using the trivial twisting morphism, is the Aubry--Chataur/Hess--Shipley model category structure.

At the level of $\infty$-categories, the former is the $\infty$-category of algebras and the latter the $\infty$-category of coalgebras. Thus our results show that the $\infty$-category of coalgebras is a localization of the $\infty$-category of algebras. Moreover, this localization is filtered by the diagram category of twisting morphisms.

From this point of view, the Koszul bar and cobar functors are equivalences of $\infty$-categories. This means that, composing with localization, at the $\infty$-categorical level, the bar functor from the $\infty$-category of algebras over an operad to the $\infty$-category of coalgebras over the Koszul dual cooperad is {\em left} adjoint to the cobar functor. This reverses the parity of the adjunction used in classical operad theory and recovers the parity that is expected by the work of Lurie~\cite[5.2]{Lurie:HA} and Ayala-Francis~\cite{AyalaFrancis:PKD}.

One caveat is that we work over a field. This does not seem to be necessary. However, in the proof of Theorem~\ref{thm:lift} we use results of Vallette~\cite{Vallette:HTHA}, who works over a field. It is likely that his results, formulated in terms of the underlying cofibrations of the model category on chain complexes, hold over a commutative ring, but in the interest of brevity we have chosen not to address this question in this paper. 

The authors would like to thank Bruno Vallette and Emily Riehl for useful conversations, and acknowledge the multiple helpful suggestions of an anonymous referee.

\section{Summary of results}
\begin{summary*}
Let $\alpha$ be a twisting morphism between a conilpotent coaugmented weight-graded cooperad $\Cc$ and an augmented operad $\Pp$ in the category of chain complexes (either unbounded or bounded below as specified in remark~\ref{rem:bounded}) over a field. If the characteristic is not zero, assume $\Omega\Cc$ and $\Pp$ are $\Sigma$-split.

Then there exists a model category structure on $\Cc$-coalgebras where the weak equivalences and cofibrations are created by the cobar functor $\Omega_\alpha$ to $\Pp$-algebras. This assignment is functorial in both $\Cc$ and $\Pp$.

Given this model structure, the cobar functor $\Omega_\alpha$ is a left Quillen functor, which is a Quillen equivalence if $\alpha$ is Koszul. The converse is true if the characteristic is zero and $\alpha$ respects an additional weight grading on $\Pp$.

The cofibrations for arbitrary $\alpha$ are also created by the forgetful functor to chain complexes, and thus two such model categories related by a morphism of operads are related via Bousfield localization.
\end{summary*}

\section{Model category structures}\label{sec:modelcatstructures}

We assume some familiarity with the theory of operads, more or less following the conventions of Loday--Vallette~\cite{LodayVallette:AO}. We refer the reader to that text for a full history and more references for the various conventions and constructions. As that text is written primarily in characteristic zero, the reader is also invited to refer to~\cite{Fresse:OCCCOHMAOO} for versions of some constructions and theorems in positive characteristic.\footnote{A warning: Fresse's conventions differ from those of~\cite{LodayVallette:AO} and ours in several ways. Aside from extensive notational differences, in~\cite{Fresse:OCCCOHMAOO}, Fresse always uses a connected weight grading that coincides with arity. There is no problem generalizing this to an arbitrary connected weight grading. He also uses a $\Sigma_*$-cofibrancy condition which is implied by our $\Sigma$-splitness assumption.}

In particular, we will make immediate use of the cobar functor $\Omega$ from conilpotent cooperads to augmented operads (defined in~\cite[6.5.2, Section 3]{LodayVallette:AO,Fresse:OCCCOHMAOO} or~\cite[Section 3]{Fresse:OCCCOHMAOO}).

Let $\chain$ denote the model category of unbounded chain complexes over a field $\fieldk$, with weak equivalences given by quasi-isomorphisms and fibrations given by surjections. All operads, cooperads, algebras, and coalgebras are taken over this ground category. So everything is assumed to be ``differential graded.''

Unless otherwise specified, we use the following notation.
\begin{itemize} 
	\item  Let $\Cc$ be a conilpotent coaugmented weight graded cooperad such that $\Omega\Cc$ is $\Sigma$-split. We denote the additional weight by $\Cc^{(i)}$, and further assume that the weight grading is connected, i.e., that $\Cc^{(0)}$ is spanned by the image of the coaugmentation in $\Cc$. 
	\item Let $\Pp$ be a $\Sigma$-split augmented operad. 
\end{itemize}
We shall denote the categories of conilpotent $\Cc$-coalgebras (in this paper all coalgebras are assumed to be conilpotent) and $\Pp$-algebras by $\Cc\coalg$ and $\Pp\alg$. 

The $\Sigma$-split conditions (defined in~\cite[4.2]{Hinich:HAHA}) are used only to ensure the existence of model structures on the categories of $\Pp$-algebras and $\Omega\Cc$-algebras. In particular, if the characteristic of $\fieldk$ is zero, all operads are $\Sigma$-split. 
Our proofs rely on results of Vallette~\cite{Vallette:HTHA} that use the connected weight graded assumptions on $\Cc$. 

Some previous authors have shied away from claiming that the category $\Cc\coalg$ is complete, but this is implied by \cite[A.1]{ChingRiehl:CMCMC}, following the arguments of~\cite{AdamekRosicky:LPAC}. See Lemma~\ref{lemma:locallypresentable}. For the interested reader, Agore~\cite{Agore:LCBHA} gives a recipe for explicit limits in the category of not-necessarily conilpotent coassociative coalgebras. Substituting the cofree conilpotent $\Cc$-coalgebra for the cofree coassociative coalgebra yields limits for nilpotent coalgebras over a more general cooperad.

\begin{remark}\label{rem:bounded}
If it is desirable, we may work instead in categories of chain complexes bounded below as follows. Let $\ell$ and $m$ be integers with $\ell+m\ge 0$. Work with 
\begin{itemize}
	\item the category of operads whose arity $n$ component is concentated in degree at least $\ell(n-1)$
	\item the category of cooperads such that the arity $n$ component of the coaugmentation coideal is concentrated in degree at least $\ell(n-1)+1$ 
	\item the categories of and algebras and coalgebras concentrated in degree at least $m$. 
\end{itemize} All constructions and theorems remain the same. Slightly more care is needed for two proofs (see Remarks~\ref{rem:boundedHinich} and~\ref{rem:boundedVallette}).  The condition on the operad $\Pp$ ensures that the free (unbounded) $\Pp$-algebra on a chain complex concentrated in degree at least $m\ge -\ell$ is itself concentrated in degree at least $m$. The condition on the cooperad has a cognate purpose for coalgebras and is shifted by one because the bar and cobar functors between operads and cooperads employ such a shift.
\end{remark}

\begin{defi}
The category of {\em twisting morphisms} from the fixed cooperad $\Cc$ is the undercategory of $\Omega\Cc$ in $\operads$, the category of $\Sigma$-split augmented operads. We denote this category $\twist_{\Cc}$, and refer to objects in $\twist_{\Cc}$ as \emph{twisting morphisms}. A twisting morphism is called {\em Koszul} if it is a quasi-isomorphism.

\end{defi} 
Note that the identity $\id$ of $\Omega \Cc$ is initial and the augmentation $\aug$ of $\Omega\Cc$ is terminal in the category of twisting morphisms from $\Cc$ ($\id$ is called $\iota$ in~\cite{LodayVallette:AO}). Also, recall that a twisting morphism $\alpha : \Omega \Cc \to \Pp$ induces an adjunction $\Omega_{\alpha} : \dg \Cc \coalg \leftrightarrow \dg \Pp \alg : B_{\alpha}$, with $\Omega_{\alpha}$ the left adjoint (see~\cite[11.2--11.3]{LodayVallette:AO} or~\cite[4.2]{Fresse:OCCCOHMAOO}). 

\begin{defi} \label{model_class}
Given $\alpha$ in $\twist_{\Cc}$ we have the following classes of maps in \dg$\Cc$-coalgebras.
\begin{itemize}
	\item The $\alpha$-\emph{weak equivalences} are maps $X \xrightarrow{f} Y$ whose image under the cobar construction $\Omega_{\alpha}(f): \Omega_{\alpha} X \xrightarrow{\sim} \Omega_{\alpha} Y$ is a quasi-isomorphism of \dg$\Pp$-algebras.
	\item The $\alpha$-\emph{cofibrations} are degree-wise monomorphisms of \dg $\Cc$-coalgebras
	\item The $\alpha$-\emph{fibrations} are those maps $f$ which have the right lifting property with respect to every acyclic $\alpha$-cofibration.  
\end{itemize}
\end{defi}

\begin{thm}(Vallette~\cite[2.1(1)]{Vallette:HTHA}) \label{Vallettemodel}
For any $\alpha$ which is a Koszul twisting morphism (called $\kappa$ in~\cite[7.4.1]{LodayVallette:AO}), the classes of morphisms in Definition \ref{model_class} form a model category structure on \dg $\Cc$-coalgebras. 
\end{thm}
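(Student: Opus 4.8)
The plan is to verify the model-category axioms directly, leaning on the adjunction $\Omega_\alpha : \Cc\coalg \leftrightarrow \Pp\alg : B_\alpha$ and on the transferred (projective) model structure on $\Pp\alg$, which exists because $\Pp$ is $\Sigma$-split. First I would reduce to the canonical twisting morphism $\id : \Omega\Cc \to \Omega\Cc$. Indeed, a Koszul $\alpha$ is by definition a quasi-isomorphism $\Omega\Cc \xrightarrow{\sim} \Pp$ of operads, so extension of scalars along $\alpha$ is a left Quillen equivalence $\Omega\Cc\alg \to \Pp\alg$, and hence preserves and reflects quasi-isomorphisms between cofibrant objects. Since $\Omega_\alpha$ factors as this extension of scalars applied to the universal cobar functor $\Omega_{\id}$, and since every cobar construction is quasi-free (hence cofibrant, as all chain complexes over a field are cofibrant), the classes of $\alpha$-weak equivalences coincide with those for $\id$. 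The cofibrations (degreewise monomorphisms) are literally the same for all $\alpha$, so $C\cap W$ and therefore the fibrations also agree; it thus suffices to build the model structure for $\id$, whose bar-cobar adjunction has unit and counit that are quasi-isomorphisms.

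Now I would verify the axioms for $\id$. Completeness and cocompleteness are standard for conilpotent coalgebras, the former via local presentability (Lemma~\ref{lemma:locallypresentable}). Two-out-of-three and the retract axiom are immediate: weak equivalences are created by $\Omega_{\id}$ from the quasi-isomorphisms (which satisfy both), cofibrations are the monomorphisms (retract-closed), and fibrations are defined by a right lifting property (automatically retract-closed). The lifting instance built into the definition, fibrations against acyclic cofibrations, is free; the remaining lifting instance and the two factorizations are the real content.

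The key structural lemma is that $\Omega_{\id}$ carries $\alpha$-cofibrations to cofibrations of algebras and acyclic $\alpha$-cofibrations to acyclic cofibrations: over a field a degreewise monomorphism of coalgebras induces on cobar constructions a map of quasi-free algebras that is free on the underlying monomorphism of chain complexes, and such a relative-free extension is a cofibration, with the acyclic case following from the definition of weak equivalence. For the factorizations I would construct, for each map, a factorization into an $\alpha$-cofibration followed by a map with the right lifting property against all $\alpha$-cofibrations, and a factorization into an acyclic $\alpha$-cofibration followed by a fibration. Because coalgebra categories admit no naive small-object argument, I would run an inductive construction along the conilpotent (coradical/weight) filtration on $\Cc$, attaching cells supplied by the cofree coalgebra functor one filtration degree at a time so that the process terminates.

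Finally, the standard retract argument closes the remaining lifting axiom once one knows the acyclicity condition: every map with the right lifting property against all $\alpha$-cofibrations is an $\alpha$-weak equivalence. This is where Koszulness is essential, since it forces the bar-cobar unit and counit to be quasi-isomorphisms and thereby makes such trivial fibrations into quasi-isomorphisms after applying $\Omega_{\id}$. Granting this, given an acyclic fibration $f$, I factor it as an $\alpha$-cofibration $i$ followed by a map $p$ with the right lifting property against cofibrations; then $p \in W$ by acyclicity, so $i$ is an acyclic cofibration by two-out-of-three, and the defining lifting property of $f$ against $i$ exhibits $f$ as a retract of $p$, whence $f$ lifts against all cofibrations. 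I expect the factorizations and this acyclicity statement to be the main obstacles: the former because the ordinary small-object argument fails in coalgebras, and the latter because it is precisely the point at which the Koszul hypothesis is used.
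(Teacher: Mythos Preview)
The paper does not give its own proof of this statement: it is recorded as Vallette's theorem with a citation, and the only comment is that the argument goes through in arbitrary characteristic. However, the paper does re-derive the result as a special case of Theorem~\ref{thm:lift} (via Corollary~\ref{initialterminalstructures}), and that argument takes a genuinely different route from yours.

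Your proposal is a direct verification of the model-category axioms in the spirit of Vallette's original proof: two factorizations built by hand along the weight filtration, plus a retract argument for the remaining lifting axiom. The paper's route is instead left induction. Proposition~\ref{prop:leftinduce}, a variant of a criterion of Bayeh--Hess--Karpova--K\k{e}dziorek--Riehl--Shipley resting on Makkai--Rosick\'y, says that once both categories are locally presentable (Lemma~\ref{lemma:locallypresentable}), the left-induced structure along $\Omega_\alpha$ exists as soon as every map factors as a $U$-cofibration followed by a $U$-weak equivalence. That single factorization is then borrowed as a black box from Vallette~[2.5], and the identification of the cofibrations with degreewise monomorphisms is borrowed from Vallette~[Theorem~2.9(1)]. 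This buys two things: the (acyclic cofibration, fibration) factorization never has to be constructed directly, and, more strikingly, Koszulness is never used. Theorem~\ref{thm:lift} holds for \emph{any} twisting morphism, and Koszulness only enters later, in Proposition~\ref{prop:quillencriterion}, to upgrade the Quillen adjunction to a Quillen equivalence. So your claim that ``this is where Koszulness is essential'' for the acyclicity condition is, from the paper's viewpoint, mistaken: acyclicity follows from the retract argument in Proposition~\ref{prop:leftinduce} with no Koszul hypothesis at all.

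Your reduction from a general Koszul $\alpha$ to the universal $\id$ is correct and matches the paper's mechanism (induction along $\alpha$ is a left Quillen equivalence by the last clause of Theorem~\ref{operadmodel}, and cobar objects are cofibrant). The place your sketch is thinnest is exactly the construction of the factorizations, which is also the substantive work in Vallette's paper; the paper sidesteps this by importing it wholesale rather than reproducing it.
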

We refer to this model structure as the \emph{Vallette model structure}. Explicitly, this theorem is proven is in characteristic zero but working in arbitrary characteristic does not change the argument.

\begin{defi}
Let $f:\Oo\to\Pp$ be a map of operads. The {\em induction} functor along $f$ from $\Oo\alg$ to $\Pp\alg$ is the left adjoint to the restriction functor along $f$ from $\Pp\alg$ to $\Oo\alg$ given by precomposition with $f$.
\end{defi}
Next, we describe two functorial constructions that employ the induction functor. We will state them in terms of functors to very large categories; a reader worried about set-theoretic issues can rephrase them in terms of assignments that respect composition or alternatively can restrict to the image, which is an ordinary category.
\begin{defi}
The very large category $\cocomplete$ has large cocomplete categories as objects, and left adjoint functors as morphisms.
\end{defi}
\begin{prop} \label{prop:arrowcocomplete}
There is a functor $\mathbf{\Omega}$ from $\twist_{\Cc}$ to the arrow category of $\cocomplete$ which takes $\alpha:\Omega\Cc\to \Pp$ to 
$$\dg\Cc\coalg \xrightarrow{\Omega_\alpha} \dg\Pp\alg$$
and takes  a map $f:\Oo\to\Pp$ under $\Omega\Cc$ to the identity on $\Cc\coalg$ and the induction functor along $f$.
\end{prop}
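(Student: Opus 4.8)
The plan is to produce, for each object $\alpha\colon\Omega\Cc\to\Pp$ of $\twist_{\Cc}$, the arrow $\Omega_\alpha\colon\Cc\coalg\to\Pp\alg$, and for each morphism $f\colon\Oo\to\Pp$ under $\Omega\Cc$ the square in the arrow category whose vertical legs are $\id_{\Cc\coalg}$ and the induction functor $f_!\colon\Oo\alg\to\Pp\alg$. First I would check that these data land in $\cocomplete$ and its arrow category at all: the categories $\Cc\coalg$, $\Oo\alg$, $\Pp\alg$ are large and cocomplete (cocompleteness of algebras over an operad is standard, and for coalgebras it follows from Lemma~\ref{lemma:locallypresentable}); each $\Omega_\alpha$ is a left adjoint since $\Omega_\alpha\dashv B_\alpha$; and $\id$ and $f_!$ are left adjoints, $f_!$ by definition and $\id$ trivially. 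Thus all the data are morphisms of $\cocomplete$, and the only real content is (i) the commutativity $f_!\circ\Omega_\beta\cong\Omega_\alpha$ of each square, and (ii) compatibility with identities and composition.

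For (i) the cleanest route is to pass to right adjoints rather than to compute the induction functor on the quasi-free algebra $\Omega_\beta X$ directly. Since $\Omega_\beta\dashv B_\beta$ and $f_!\dashv f^*$, the composite $f_!\circ\Omega_\beta$ is left adjoint to $B_\beta\circ f^*$, while $\Omega_\alpha$ is left adjoint to $B_\alpha$; by uniqueness of adjoints it therefore suffices to produce a natural isomorphism $B_\beta\circ f^*\cong B_\alpha$. I claim these two bar constructions agree on the nose. For a $\Pp$-algebra $A$, both $B_\alpha A$ and $B_\beta(f^*A)$ have the same underlying graded object, the cofree conilpotent $\Cc$-coalgebra on the underlying chain complex of $A$ (the restriction $f^*$ does not change the underlying chain complex), and the internal parts of their differentials agree for the same reason, so everything comes down to the twisting coderivations. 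A coderivation on a cofree coalgebra is determined by its corestriction to cogenerators; the corestriction defining $B_\beta(f^*A)$ is assembled from the operadic twisting morphism $\beta\circ\iota\colon\Cc\to\Oo$ and the $\Oo$-algebra structure on $f^*A$, and the latter factors as $\Oo\circ A\xrightarrow{f}\Pp\circ A\to A$ through the $\Pp$-structure. Postcomposing $\beta\circ\iota$ with $f$ gives $f\circ(\beta\circ\iota)=(f\circ\beta)\circ\iota=\alpha\circ\iota$, which is exactly the twisting morphism used to define $B_\alpha A$. Hence the two corestrictions coincide, the coderivations coincide, and $B_\beta(f^*A)=B_\alpha A$ naturally in $A$; passing back to left adjoints gives $f_!\Omega_\beta\cong\Omega_\alpha$.

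For (ii), $\mathbf{\Omega}$ sends $\id_\Pp$ to the identity square since $(\id_\Pp)_!=\id_{\Pp\alg}$, and for composable $f\colon\Oo\to\Pp$, $g\colon\Pp\to\Qq$ the equality $(g\circ f)^*=f^*\circ g^*$ of restriction functors gives, again by uniqueness of adjoints, a coherent isomorphism $(g\circ f)_!\cong g_!\circ f_!$; pasting the two commuting squares then yields the square for $g\circ f$.

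I expect the main obstacle to be bookkeeping rather than anything conceptual. On the mathematical side it is the coderivation identification of the second paragraph: one must unwind the definition of the bar functor and verify that the corestriction to cogenerators really does factor through $f$ as claimed. On the foundational side, the target is the arrow category of a $1$-category $\cocomplete$ whose morphisms are honest left adjoint functors, so strict functoriality requires choosing canonical models (for instance realizing $f_!$ by the reflexive coequalizer defining the relative composite product) in order to rigidify the isomorphisms $(gf)_!\cong g_!f_!$ and $f_!\Omega_\beta\cong\Omega_\alpha$; alternatively one invokes the caveat already made before the statement and regards $\mathbf{\Omega}$ as a pseudofunctor, or restricts to its image, which is an ordinary category. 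Either way the substance is the single natural isomorphism of (i), from which the remaining verifications follow formally.
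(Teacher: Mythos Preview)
Your proposal is correct and follows exactly the approach the paper takes: the paper's one-line proof is ``It is easy to check that the right adjoints commute and that composition of morphisms under $\Omega\Cc$ is respected,'' and you have simply unpacked what that sentence means, verifying $B_\beta\circ f^*=B_\alpha$ by comparing corestrictions of the twisting coderivations and then invoking uniqueness of left adjoints. The only discrepancy is notational (you use $\beta$ for the map to $\Oo$ and $\alpha$ for the map to $\Pp$, whereas the diagram immediately following the proposition swaps these), which is harmless.
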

\begin{proof}
It is easy to check that the right adjoints commute and that composition of morphisms under $\Omega\Cc$ is respected. 
\end{proof}
Diagramatically, using the left adjoints, the commutativity of the left diagram in $\operads$ implies the commutativity of the right diagram in $\cocomplete$.
\begin{center}\begin{tikzcd}
\Omega\Cc
\ar{dr}[swap]{\beta}
\ar{r}{\alpha}
&\Oo\ar{d}{f}
\\
&\Pp
\end{tikzcd}
\qquad
\begin{tikzcd}
\Cc\coalg
\ar{drr}[swap]{\Omega_\beta}\ar{rr}{\Omega_\alpha} &&
\Oo\alg
\ar{d}{\text{induction along }f}
\\&&
\Pp\alg.
\end{tikzcd}
\end{center}
\begin{defi}
The very large category $\model$ has objects large model categories, and morphisms left Quillen functors.
\end{defi}
Note that there is a forgetful functor from $\model$ to $\cocomplete$.

\begin{thm} \label{operadmodel} (Hinich)
There is a functor from $\operads$ to $\model$ which takes $\Pp$ to its model category of algebras and a map of operads to the induction functor along it. The image of $\Pp$ under this functor is a cofibrantly generated model category with fibrations and weak equivalences created by the forgetful functor to $\chain$. This functor takes $\Sigma$-split quasi-isomorphisms of operads to left Quillen equivalences.
\end{thm}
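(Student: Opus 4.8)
The plan is to obtain this as an instance of the transfer (lifting) theorem for cofibrantly generated model structures along the free--forgetful adjunction $F_\Pp : \chain \leftrightarrow \Pp\alg : U_\Pp$, where $U_\Pp$ is the forgetful functor and $F_\Pp$ sends a complex $V$ to the free $\Pp$-algebra $\Pp(V) = \bigoplus_n (\Pp(n)\otimes V^{\otimes n})_{\Sigma_n}$. Recalling that $\chain$ is cofibrantly generated with generating cofibrations $I$ and generating acyclic cofibrations $J$, one declares a map in $\Pp\alg$ to be a fibration or a weak equivalence exactly when $U_\Pp$ sends it to one; the candidate generating (acyclic) cofibrations are then $F_\Pp I$ and $F_\Pp J$. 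By construction the fibrations and weak equivalences are created by $U_\Pp$, as required.

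Next I would verify the hypotheses of the transfer theorem. The smallness and permanence-under-colimit conditions are automatic because $\chain$ and $\Pp\alg$ are locally presentable (over a field $\chain$ is, and $\Pp\alg$ is a category of algebras for an accessible monad; cf.\ Lemma~\ref{lemma:locallypresentable}), so every object is small. The one substantive point is the acyclicity condition: every relative $F_\Pp J$-cell complex must be a quasi-isomorphism. Here one filters a pushout of $A$ along a free map $F_\Pp X \to F_\Pp Y$ by the canonical filtration whose associated graded is built from the Schur functor $V \mapsto \Pp(V)$, and checks that an acyclic cofibration $X \xrightarrow{\sim} Y$ of complexes is carried to a quasi-isomorphism. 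This is exactly the step that requires the $\Sigma$-split hypothesis: it guarantees that $(\Pp(n)\otimes -)_{\Sigma_n}$ preserves quasi-isomorphisms even in positive characteristic (in characteristic zero it is automatic, since $\Sigma_n$-representations are semisimple and coinvariants are exact). I expect this acyclicity verification to be the main obstacle; it is the heart of Hinich's argument, and I would cite \cite{Hinich:HAHA} (with the bounded-below bookkeeping of Remark~\ref{rem:boundedHinich}) rather than reproduce it. Granting it, the transfer theorem produces the desired cofibrantly generated model structure.

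For functoriality, given $f : \Oo \to \Pp$ the restriction functor $f^* : \Pp\alg \to \Oo\alg$ satisfies $U_\Oo f^* = U_\Pp$. Since fibrations and weak equivalences in both categories are created on underlying complexes, $f^*$ preserves fibrations and acyclic fibrations, hence is right Quillen; its left adjoint, induction along $f$, is therefore left Quillen. Functoriality of the assignment $\Pp \mapsto \Pp\alg$, $f \mapsto (\text{induction along } f)$ then follows by the same adjoint bookkeeping as in Proposition~\ref{prop:arrowcocomplete}: the restriction functors compose strictly, so their left adjoints compose up to canonical isomorphism, giving a functor into $\model$ lying over the functor into $\cocomplete$.

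Finally, for the Quillen-equivalence claim, note that every object of $\Pp\alg$ is fibrant (fibrations are underlying surjections and the terminal object is $0$) and that $f^*$ both preserves and reflects weak equivalences. By a standard criterion it then suffices to show that for a $\Sigma$-split quasi-isomorphism $f$ the derived unit $A \to f^* f_! A$ is a quasi-isomorphism for every cofibrant $\Oo$-algebra $A$. On a free algebra this unit is the map $\Oo(V) \to \Pp(V)$ induced arity-wise by $f$, which is a quasi-isomorphism precisely because $\Sigma$-splitness makes $(- \otimes V^{\otimes n})_{\Sigma_n}$ homotopical; one then propagates this along the cell filtration of a cofibrant algebra by the same associated-graded comparison used for acyclicity, and concludes by passing to retracts. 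This again reduces to Hinich's invariance theorem, which I would cite.
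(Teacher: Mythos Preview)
Your proposal is correct and follows essentially the same route as the paper: both defer the existence of the model structure and the Quillen-equivalence statement to Hinich's paper, and both obtain the left Quillen functoriality from the elementary observation that restriction preserves underlying complexes and hence fibrations and weak equivalences. You simply unpack more of what Hinich's transfer argument and invariance theorem actually say, whereas the paper is content to cite them.
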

\begin{proof}
The existence of such a model category on $\Pp\up{-alg}$ is shown in~\cite[Section 4]{Hinich:HAHA}. As Hinich pointed out, to see that this assignment respects maps of operads, note that the right adjoint (the restriction functor) preserves underlying chain complexes, and thus preserves fibrations and weak equivalences in $\chain$. 

The statement about quasi-isomorphisms is implied by the proof of \cite[Theorem 4.7.4]{Hinich:HAHA}.
\end{proof}
\begin{remark}\label{rem:boundedHinich}
Hinich explicitly works over unbounded complexes. Following Remark~\ref{rem:bounded}, to transfer a model structure along the free $\Pp$-algebra functor from complexes in degree at least $m$ to $\Pp$-algebras in degree at least $m$, there is a homological condition on the cofibrant generators that must be verified~(see \cite{Crans:QCMSS} or \cite[1.4.23]{CisinskiPCMTH}). The cofibrant generators in the category of complexes in degree at least $m$ are a subset of the cofibrant generators in the category of unbounded complexes. As a result, to extend Hinich's result to the category of bounded complexes, it suffices to note that the inclusion functor from $\Pp$-algebras in degree at least $m$ into unbounded $\Pp$-algebras is a left adjoint which reflects quasi-isomorphisms.  
\end{remark}

Proposition \ref{prop:arrowcocomplete} gives a functor from the undercategory of $\Omega \Cc$ in operads to the arrow category of $\cocomplete$. Theorem \ref{operadmodel} gives a functor from $\operads$ to $\model$. We will find a lift of the functor from Proposition \ref{prop:arrowcocomplete} that lands in model categories. To wit:
\begin{thm}\label{thm:lift}
Let $\alpha$ and $\beta$ be twisting morphisms $\Omega\Cc\to\Pp$ and let $f$ be a map of twisting morphisms $\alpha\to\beta$. The following is true.
\begin{enumerate} 
\item  There is a model category structure on $\Cc\coalg$, called the $\alpha$-model structure, such that the cobar functor $\Omega_\alpha$ is a left Quillen functor to $\Pp\alg$. 
\item
The weak equivalences and cofibrations of the $\alpha$-model structure are created by $\Omega_\alpha$. In fact, the cofibrations are also created by the forgetful functor to $\chain$ and are thus independent of $\alpha$.
\item
 The identity on $\Cc\coalg$ is a left Quillen functor (in fact a left Bousfield localization), from the $\alpha$-model structure to the $\beta$-model structure. This functor, $\Omega_\alpha$, and $\Omega_\beta$ commute with the left Quillen functor given by induction along $f$ between the categories of algebras over the operads.
 \end{enumerate}
\end{thm}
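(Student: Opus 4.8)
My plan is to realize every $\alpha$-model structure as a left Bousfield localization of the finest one, attached to the canonical twisting morphism $\iota=\id_{\Omega\Cc}$. The cofibrations require no work: by Definition~\ref{model_class} they are the degreewise monomorphisms, which over a field are exactly the maps carried to cofibrations by the forgetful functor to $\chain$, so they are the same for all $\alpha$; this is the second half of~(2). In particular $0\to X$ is always a degreewise monomorphism, so every object of $\Cc\coalg$ is cofibrant, and consequently each of these model structures is left proper. For the base case, $\iota$ is a quasi-isomorphism, hence Koszul, so Theorem~\ref{Vallettemodel} already gives a model structure --- the $\iota$-structure --- in which $\Omega_\iota$ is left Quillen; combining Lemma~\ref{lemma:locallypresentable} with the cofibrant generation of Vallette's construction, this structure is combinatorial. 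Combinatorial, left proper structures are exactly what admit left Bousfield localizations.

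For an arbitrary $\alpha$ I would factor $\Omega_\alpha=\mathrm{Ind}_\alpha\circ\Omega_\iota$ as in Proposition~\ref{prop:arrowcocomplete}, where $\mathrm{Ind}_\alpha$ is left Quillen by Theorem~\ref{operadmodel}, and note that the $\alpha$-weak equivalences are precisely $\Omega_\alpha^{-1}$ of the quasi-isomorphisms in $\Pp\alg$. Since $\Omega_\alpha$ is a left adjoint between locally presentable categories it is accessible, and the quasi-isomorphisms of the combinatorial category $\Pp\alg$ form an accessible localizer, so their preimage --- the class of $\alpha$-weak equivalences --- is again an accessible localizer. It therefore coincides with the class of $S$-local equivalences for some \emph{set} $S$; moreover it contains the $\iota$-weak equivalences, because $\mathrm{Ind}_\alpha$ preserves quasi-isomorphisms between cofibrant objects and every cobar algebra is quasi-free, hence cofibrant. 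Localizing the $\iota$-structure at $S$ thus yields a model structure with the same cofibrations and with weak equivalences exactly the $\alpha$-weak equivalences, whose fibrations are then those of Definition~\ref{model_class}; this is the $\alpha$-model structure. Finally $\Omega_\alpha$ is left Quillen on the $\iota$-structure and inverts $S$, so by the universal property of the localization it is left Quillen out of the $\alpha$-structure. This gives~(1) and the rest of~(2).

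For~(3), the identity functor from the $\alpha$-structure to the $\beta$-structure preserves cofibrations (they coincide) and acyclic cofibrations: if $\Omega_\alpha(g)$ is a quasi-isomorphism then, since it is a map between cofibrant objects and induction along $f$ is left Quillen, $\Omega_\beta(g)=\mathrm{Ind}_f(\Omega_\alpha(g))$ is a quasi-isomorphism, so every $\alpha$-weak equivalence is a $\beta$-weak equivalence. Hence the identity is left Quillen, and as it fixes the cofibrations while enlarging the weak equivalences it is the left Bousfield localization of the $\alpha$-structure at the new $\beta$-weak equivalences. The asserted commutation of this localization, $\Omega_\alpha$, and $\Omega_\beta$ with induction along $f$ is just the commuting square of left adjoints of Proposition~\ref{prop:arrowcocomplete}.

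The main obstacle is the accessibility argument of the second paragraph: proving that the $\alpha$-weak equivalences really do form an accessible localizer and that the localization at the resulting set $S$ has \emph{precisely} these maps as weak equivalences. This is where the local presentability of $\Cc\coalg$ (Lemma~\ref{lemma:locallypresentable}), the cofibrant generation of the Vallette $\iota$-structure, and the accessibility of $\Omega_\alpha$ must be combined; once the localization is known to exist with the correct weak equivalences, statements~(1)--(3) follow formally.
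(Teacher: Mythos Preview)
Your approach is genuinely different from the paper's. Rather than localizing the Vallette $\id$-structure, the paper left-induces the model structure along $\Omega_\alpha$ directly, using an acyclicity criterion from~\cite{BayehHessKarpovaKedziorekRiehlShipley:LIMSDC} (recorded as Proposition~\ref{prop:leftinduce}): it suffices to factor every map as an $\Omega_\alpha$-cofibration followed by an $\Omega_\alpha$-weak equivalence. Vallette already produces such a factorization for $\id$, and since $\Omega_\alpha=\mathrm{Ind}_\alpha\circ\Omega_{\id}$ with $\mathrm{Ind}_\alpha$ left Quillen and every $\Omega_{\id}$-image cofibrant, Vallette's factorization works verbatim for every $\alpha$. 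This is more elementary than your route: no Smith-type theorem, no accessibility bookkeeping, and no need to identify the localized weak equivalences after the fact. The paper then shows that the $\Omega_\alpha$-created cofibrations coincide with degreewise monomorphisms via the sandwich $\Omega_{\id}\text{-cofib}\subseteq\Omega_\alpha\text{-cofib}\subseteq\Omega_\aug\text{-cofib}$, invoking Vallette for the equality of the endpoints. Your localization approach has the conceptual payoff that part~(3) becomes automatic, but at the cost of heavier input.

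The gap you flag is real and not merely cosmetic. Accessibility of $W_\alpha=\Omega_\alpha^{-1}(\text{quasi-isos})$ together with two-of-three does \emph{not} by itself force $W_\alpha$ to be the class of $S$-local equivalences for some set $S$; you need the remaining hypotheses of Smith's theorem, most importantly that $\mathrm{Cof}\cap W_\alpha$ is closed under pushout and transfinite composition. This \emph{can} be checked here: $\Omega_\alpha$ is a left adjoint, it sends degreewise monomorphisms to cofibrations in $\Pp\alg$ (by the factorization through $\Omega_{\id}$ you already use), hence it sends maps in $\mathrm{Cof}\cap W_\alpha$ to trivial cofibrations, and those are stable under the relevant colimits; since colimits in $\Cc\coalg$ are created in $\chain$, the preimage class inherits the same stability. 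With that verification your argument goes through. A smaller omission: you establish that the cofibrations of your localized structure are created by the forgetful functor, but part~(2) also asserts they are created by $\Omega_\alpha$; in your framework this is not definitional and needs the same sandwich argument the paper uses.
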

The proof of Theorem~\ref{thm:lift} will be deferred to Section~\ref{sec:proof}.

We use the notation $\Cc\coalg_\alpha$ for $\Cc\coalg$ equipped with the $\alpha$-model structure.
\begin{remark}
A clean encapsulation of the first part of Theorem~\ref{thm:lift} is the existence of a filler for the following diagram:

\begin{center}
\begin{tikzcd}
{}& \arr(\cocomplete)\\
\twist_{\Cc} = \Omega\Cc\downarrow\operads
\ar{ur}{\mathbf{\Omega}}\ar[dotted]{r}
\ar{d}[swap]{\text{target}}
& \arr(\model)\ar{u}[swap]{\mathrm{forget}}\ar{d}{\mathrm{target}}
\\
\operads\ar{r}
& \model.
\end{tikzcd}
\end{center}

This diagram does not show that the weak equivalences and cofibrations are created by $\Omega_\alpha$ but otherwise expresses the connection among the different extant structures and constructions.

The third part of Theorem~\ref{thm:lift} says that the commutativity of the left diagram in $\operads$ implies the commutativity of the right diagram in $\model$. This upgrades the implication of diagrams described in Proposition~\ref{prop:arrowcocomplete}.
\begin{center}\begin{tikzcd}
\Omega\Cc
\ar{d}[swap]{\id}
\ar{r}{\alpha}
&\Oo\ar{d}{f}
\\
\Omega\Cc\ar{r}[swap]{\beta}&\Pp
\end{tikzcd}
\qquad
\begin{tikzcd}
\Cc\coalg_\alpha
\ar{d}[swap]{\id}\ar{rr}{\Omega_\alpha} &&
\Oo\alg
\ar{d}{\text{induction along }f}
\\
\Cc\coalg_\beta\ar{rr}[swap]{\Omega_\beta}&&
\Pp\alg.
\end{tikzcd}
\end{center}
\end{remark}
The Vallette model structure and the Aubry--Chataur/Hess--Shipley model structure (in the case of coalgebras over a cooperad) are recovered by Theorem~\ref{thm:lift}.
\begin{cor}\label{initialterminalstructures}
The value of the functor of Theorem~\ref{thm:lift} on the initial object of $\twist_\Cc$ is the Vallette model structure of Theorem~\ref{Vallettemodel}. The value of the functor on the terminal object of $\twist_\Cc$ is the Aubry--Chataur/Hess--Shipley model structure, where weak equivalences and cofibrations are created by forgetting to $\chain$ (explicitly, they are quasi-isomorphisms and inclusions).
\end{cor}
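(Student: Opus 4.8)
The plan is to exploit the fact that a model structure on a fixed category is determined by its classes of cofibrations and weak equivalences: in any model category the fibrations are forced to be exactly the maps with the right lifting property against the acyclic cofibrations, and the acyclic cofibrations are in turn determined by the cofibrations and the weak equivalences. Hence for each of the two extreme objects of $\twist_\Cc$ it suffices to match the cofibrations and weak equivalences produced by Theorem~\ref{thm:lift} with those of the asserted target structure, and equality of the full model structures will follow.

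For the initial object, recall that it is the identity $\id\colon\Omega\Cc\to\Omega\Cc$. Since $\id$ is an isomorphism it is in particular a quasi-isomorphism, hence a Koszul twisting morphism, so Theorem~\ref{Vallettemodel} applies and the classes of Definition~\ref{model_class} with $\alpha=\id$ assemble into the Vallette model structure. On the other hand, parts (1) and (2) of Theorem~\ref{thm:lift} assert that the $\id$-model structure has precisely these cofibrations (the degreewise monomorphisms) and these weak equivalences (the maps sent by $\Omega_\id$ to quasi-isomorphisms). By the determinacy principle above the two structures coincide, so essentially no computation is needed here.

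For the terminal object, recall that it is the augmentation $\aug\colon\Omega\Cc\to I$ to the unit operad $I$, whose category of algebras is canonically $\chain$. First I would unwind the definition of the cobar functor for the trivial twisting morphism and check that, under the identification $I\alg\cong\chain$, the functor $\Omega_\aug$ is (up to the suspension built into the bar--cobar formalism) the forgetful functor $\Cc\coalg\to\chain$: the twisting term of the cobar differential is built from $\alpha$ and so vanishes for $\alpha=\aug$, leaving only the internal differential, while the free $I$-algebra functor is the identity on underlying complexes. In particular $\Omega_\aug(f)$ is a quasi-isomorphism if and only if the underlying map of $f$ is, because suspension preserves and reflects quasi-isomorphisms. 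Consequently the $\aug$-weak equivalences are exactly the quasi-isomorphisms of underlying complexes, while by part (2) of Theorem~\ref{thm:lift} the $\aug$-cofibrations are the degreewise monomorphisms, i.e. the inclusions. These are precisely the weak equivalences and cofibrations of the Aubry--Chataur/Hess--Shipley structure, so the determinacy principle again identifies the two.

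The main obstacle I anticipate is this last explicit computation for the terminal object: confirming that the trivial twisting morphism really does collapse the twisted cobar differential to the internal one, so that $\Omega_\aug$ becomes the (suspended) forgetful functor, and then matching this against the precise description of the Aubry--Chataur/Hess--Shipley weak equivalences and cofibrations recorded in the literature. Once this identification of $\Omega_\aug$ is in hand, both halves of the corollary reduce to the single determinacy observation.
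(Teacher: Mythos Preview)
Your argument is correct and follows the same route as the paper: identify the cofibrations and weak equivalences in each extreme case and invoke the fact that these two classes determine a model structure. The paper's proof is terser but makes exactly the same two observations---that the Vallette structure already has weak equivalences created by $\Omega_{\id}$ and cofibrations created by the forgetful functor, and that $\Omega_\aug$ is literally the forgetful functor to $\chain$.

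One small point: in the conventions the paper adopts (following Loday--Vallette), the coalgebra-level cobar functor $\Omega_\alpha X$ has underlying graded object the free $\Pp$-algebra on $X$ with no shift; the desuspension in the bar--cobar formalism lives on the operadic side, not on the algebra side. So for $\Pp=I$ the functor $\Omega_\aug$ is the forgetful functor on the nose, not merely up to suspension. Your hedge (``suspension preserves and reflects quasi-isomorphisms'') makes the argument go through regardless, but you can simply drop the suspension qualifier.
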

\begin{proof}
The Vallette model structure has weak equivalences created by $\Omega_{\id}$ and cofibrations created by the forgetful functor to $\chain$. Thus the constructions yield the same model structure. 

For the terminal twisting morphism $\aug$, the cobar functor $\Omega_\aug$ is the forgetful functor to $\chain$.
\end{proof}

The construction of model categories from twisting morphisms is functorial not only in operads but also in cooperads. That is, if $f:\Cc\to \Dd$ is a map of cooperads, then any twisting morphism $\alpha:\Omega\Dd\to \Pp$ induces a twisting morphism $\alpha' :=\alpha\circ\Omega f:\Omega\Cc\to \Pp$. 
\begin{prop}Let $f:\Cc\to \Dd$ be a map of cooperads and $\alpha:\Omega\Dd\to\Pp$ a twisting morphism. Then pushforward $f_*$ is a left Quillen functor from $\Cc\up{-coalg}$ with the $\Omega_{\alpha'}$ model structure to $\Dd\up{-coalg}$ with the $\Omega_\alpha$ model structure.
\end{prop}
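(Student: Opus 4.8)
The plan is to show that $f_*$ is a left adjoint that preserves cofibrations and all weak equivalences; since every acyclic cofibration is a cofibration which is a weak equivalence, this suffices to make $f_*$ left Quillen. First I would recall that $f_*$ is defined by postcomposing a $\Cc$-coalgebra structure $X \to \Cc(X)$ with $f$, producing the $\Dd$-coalgebra $X \to \Cc(X) \xrightarrow{f(X)} \Dd(X)$ on the same underlying complex; coassociativity, counitality, and conilpotency are inherited because $f$ is a morphism of coaugmented cooperads. In particular $f_*$ commutes with the forgetful functors to $\chain$, so that $U_\Dd \circ f_* = U_\Cc$.

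The heart of the argument is the identity $\Omega_\alpha \circ f_* = \Omega_{\alpha'}$ of functors $\Cc\coalg \to \Pp\alg$. Writing $\bar\alpha$ for the twisting morphism $\Dd \to \Pp$ obtained by restricting $\alpha$ along the universal twisting morphism $\iota_\Dd\colon \Dd \to \Omega\Dd$, naturality of $\iota$ (that is, $\Omega f \circ \iota_\Cc = \iota_\Dd \circ f$) gives $\alpha' \circ \iota_\Cc = \alpha \circ \Omega f \circ \iota_\Cc = \bar\alpha \circ f$; thus $\alpha'$ corresponds to the composite twisting morphism $\bar\alpha \circ f\colon \Cc \to \Dd \to \Pp$. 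For a $\Cc$-coalgebra $X$ the algebras $\Omega_\alpha(f_* X)$ and $\Omega_{\alpha'}(X)$ share the same underlying free $\Pp$-algebra and the same internal differential, so it remains only to compare the twisting perturbations: that of $\Omega_\alpha(f_* X)$ is assembled from the $\Dd$-costructure $f(X)\circ\Delta_X$ and $\bar\alpha$, hence equals $\bar\alpha\circ f(X)\circ\Delta_X$, while that of $\Omega_{\alpha'}(X)$ is assembled from $\Delta_X$ and $\overline{\alpha'} = \bar\alpha\circ f$. These coincide, establishing the identity. From it, $f_*$ preserves (and reflects) weak equivalences, since $g$ is an $\alpha'$-weak equivalence precisely when $\Omega_{\alpha'}(g) = \Omega_\alpha(f_* g)$ is a quasi-isomorphism, i.e. precisely when $f_* g$ is an $\alpha$-weak equivalence. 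Moreover, by Theorem~\ref{thm:lift}(2) the cofibrations in both structures are exactly the degreewise monomorphisms, which $f_*$ preserves because it does not alter underlying complexes. Hence $f_*$ preserves cofibrations, weak equivalences, and acyclic cofibrations.

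Finally I would check that $f_*$ is a left adjoint. Both $\Cc\coalg$ and $\Dd\coalg$ are locally presentable by Lemma~\ref{lemma:locallypresentable}, and the forgetful functors $U_\Cc,U_\Dd$ are left adjoints (with right adjoints the cofree coalgebra functors) and are conservative. Since $U_\Dd\circ f_* = U_\Cc$, the functor $f_*$ preserves all colimits: for any diagram $D$ the comparison map $f_*(\colim D)\to \colim f_* D$ becomes, under $U_\Dd$, the canonical isomorphism between two colimits of the same diagram of complexes, and is therefore an isomorphism by conservativity. A colimit-preserving functor between locally presentable categories has a right adjoint, so $f_*$ is left Quillen. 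I expect the main obstacle to be the differential-level identity $\Omega_\alpha\circ f_* = \Omega_{\alpha'}$: once this is in place, preservation of the relevant classes of maps is formal and the existence of the right adjoint is a routine consequence of local presentability.
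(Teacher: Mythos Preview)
Your proposal is correct and follows essentially the same approach as the paper: both rest on the identity $\Omega_{\alpha'} = \Omega_\alpha \circ f_*$ together with the observation that $f_*$ does not change the underlying chain complex, so cofibrations (degreewise monomorphisms, by Theorem~\ref{thm:lift}(2)) are preserved. The paper's proof simply asserts the key identity and stops once cofibrations and weak equivalences are seen to be preserved; you go further by unpacking the twisting-morphism bookkeeping behind $\Omega_{\alpha'} = \Omega_\alpha \circ f_*$ and by actually verifying that $f_*$ is a left adjoint via the adjoint functor theorem for locally presentable categories, which the paper leaves implicit. One small slip: the canonical comparison map for colimits goes $\colim f_* D \to f_*(\colim D)$, not the direction you wrote, but your conservativity argument applies to that map just as well.
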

\begin{proof}
The pushforward functor does not change the underlying chain complex. Therefore, since cofibrations are created in $\chain$, they are preserved by pushforward. Next, note that $\Omega_{\alpha'} = \Omega_\alpha \circ f_*$. Let $\phi$ be a map in $\Cc\up{-coalg}$ which becomes a quasi-isomorphism in $\Pp$-alg under this functor. Then $f_*\phi$ becomes a quasi-isomorphism in $\Pp$-alg under $\Omega_\alpha$. Therefore $f_*$ preserves weak equivalences.
\end{proof}
There is a simple criterion for the Quillen pair $(\Omega_\alpha, B_\alpha)$ to be a Quillen equivalence. 
\begin{prop} \label{prop:quillencriterion}
Let $\alpha:\Omega\Cc\to \Pp$ be a twisting morphism. If $\alpha$ is Koszul, then the Quillen pair
\[
\xymatrix{
\Cc\coalg_\alpha \ar@<1.4ex>[rr]^(.56){\Omega_\alpha}
&\perp&
\Pp\alg \ar@<1.4ex>[ll]^(.44){B_\alpha}
}
\] 
is a Quillen equivalence. If $\Pp$ is weight graded, the twisting morphism $\alpha$ respects weight gradings, and the characteristic is zero, then the converse is true.
\end{prop}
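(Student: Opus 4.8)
The plan is to handle the two implications separately, in both cases exploiting the factorization $\Omega_\alpha = \operatorname{ind}_\alpha \circ \Omega_{\id}$ of Proposition~\ref{prop:arrowcocomplete}, where $\Omega_{\id}\colon\Cc\coalg\to\Omega\Cc\alg$ is the cobar functor for the initial twisting morphism and $\operatorname{ind}_\alpha$ is induction along $\alpha$.

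For the forward direction, suppose $\alpha$ is Koszul. I would first show that the $\alpha$-model structure and the Vallette ($\id$-)model structure on $\Cc\coalg$ coincide. They have the same cofibrations by Theorem~\ref{thm:lift}(2). For the weak equivalences, note that $\Omega_{\id}X$ is quasi-free, hence cofibrant in $\Omega\Cc\alg$, for every $X$, so for any map $f$ the map $\Omega_{\id}f$ lies between cofibrant $\Omega\Cc$-algebras. Since $\alpha$ is a ($\Sigma$-split) quasi-isomorphism of operads, Theorem~\ref{operadmodel} makes $\operatorname{ind}_\alpha$ a left Quillen equivalence, and a left Quillen equivalence both preserves (Ken Brown) and reflects quasi-isomorphisms between cofibrant objects; hence $\Omega_\alpha f = \operatorname{ind}_\alpha(\Omega_{\id}f)$ is a quasi-isomorphism if and only if $\Omega_{\id}f$ is, so the $\alpha$- and $\id$-weak equivalences agree. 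With the two structures identified, $\Omega_\alpha = \operatorname{ind}_\alpha\circ\Omega_{\id}$ is a composite of the Quillen equivalence $\Omega_{\id}$ (Vallette~\cite{Vallette:HTHA}, the Koszul case for the initial twisting morphism) with the Quillen equivalence $\operatorname{ind}_\alpha$, and is therefore itself a Quillen equivalence.

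For the converse, assume the characteristic is zero, $\Pp$ is weight graded, $\alpha$ respects the gradings, and $(\Omega_\alpha,B_\alpha)$ is a Quillen equivalence; I want to conclude that $\alpha$ is a quasi-isomorphism. The first step is to extract a usable consequence. Every $\Cc$-coalgebra is cofibrant, since the map from the initial object is a monomorphism, and every $\Pp$-algebra is fibrant, since fibrations are created in $\chain$; thus cofibrant and fibrant replacements may be taken to be identities, and the entire content of the Quillen-equivalence hypothesis reduces (since $\Omega_\alpha$ automatically reflects weak equivalences, by the very definition of the $\alpha$-model structure) to the statement that the bar--cobar counit $\Omega_\alpha B_\alpha A \to A$ is a quasi-isomorphism for every $\Pp$-algebra $A$.

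The remaining, and most delicate, step is to deduce Koszulness from this universal acyclicity. I would specialize to free algebras $A=\Pp(V)$ and analyze the counit through the filtration coming from the connected weight grading on $\Cc$, and hence on $\Omega\Cc$, and on $\Pp$. In each fixed total weight this filtration has only finitely many layers, so the associated spectral sequence converges, and in characteristic zero the functor of $\Sigma_n$-coinvariants is exact by Maschke's theorem, so the homology of each layer is computed arity by arity. Tracking the comparison map induced by $\alpha$ through this spectral sequence, which is precisely the weight-graded comparison lemma of Loday--Vallette~\cite{LodayVallette:AO}, the acyclicity of the counit forces $\alpha$ to induce an isomorphism $H_*(\Omega\Cc(n))\to H_*(\Pp(n))$ in every arity $n$, so that $\alpha$ is Koszul. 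I expect this spectral-sequence comparison to be the main obstacle, since it is exactly where both standing hypotheses enter: characteristic zero keeps coinvariants exact and the connected weight grading guarantees convergence. This also explains why the converse cannot be argued purely formally; the formal argument, using that restriction preserves underlying complexes and so is conservative, only shows that $\operatorname{ind}_\alpha$ induces a reflective localization on homotopy categories, and a reflective localization need not be an equivalence.
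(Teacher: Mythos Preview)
Your argument is correct, but the forward direction takes a genuinely different route from the paper. The paper's proof is uniform: it reduces both directions to the single statement that the counit $\Omega_\alpha B_\alpha A \to A$ is a quasi-isomorphism for every $\Pp$-algebra $A$, observing that (since every coalgebra is cofibrant and every algebra is fibrant) this condition is equivalent to $(\Omega_\alpha,B_\alpha)$ being a Quillen equivalence. For the forward direction it then simply cites Fresse~\cite[4.2.4]{Fresse:OCCCOHMAOO} for the acyclicity of the bar--cobar counit when $\alpha$ is Koszul, and for the converse it cites Loday--Vallette~\cite[Theorem~11.3.3]{LodayVallette:AO}. Your forward argument instead factors $\Omega_\alpha = \operatorname{ind}_\alpha\circ\Omega_{\id}$ and composes two known Quillen equivalences (Hinich's for $\operatorname{ind}_\alpha$ along a $\Sigma$-split quasi-isomorphism, Vallette's for $\Omega_{\id}$), after first showing that the $\alpha$- and $\id$-model structures coincide. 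This is a pleasant alternative: it avoids Fresse's bar--cobar resolution result and yields as a byproduct the identification of the two model structures, a fact the paper uses elsewhere (in the coassociative examples) but does not isolate. The paper's route is shorter and more symmetric between the two implications. Your converse is essentially the paper's: you extract the counit condition and then unpack what the cited Loday--Vallette theorem actually proves via the weight-graded comparison lemma, whereas the paper just cites it.
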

\begin{proof} 
Let $X$ range over all $\Cc$-coalgebras and $A$ over all $\Pp$-algebras. Consider the following conditions.
\begin{enumerate}
\item 
The pair $(\Omega_\alpha, B_\alpha)$ is a Quillen equivalence.
\item
A map $\Omega_\alpha X\to A$ is a weak equivalence if and only if the adjoint $X\to B_\alpha A$ is a weak equivalence.
\item
A map $\Omega_\alpha X\to A$ is a quasi-isomorphism if and only if $\Omega_\alpha X\to \Omega_\alpha B_\alpha A$ is a quasi-isomorphism.
\item
The map $\Omega_\alpha B_\alpha A\to A$ is a quasi-isomorphism.
\end{enumerate}
Conditions (1) and (2) are equivalent because every coalgebra is cofibrant and every algebra is fibrant. Conditions (2) and (3) are equivalent by the definitions of weak equivalence in the respective model categories. Fresse shows that when $\alpha$ is Koszul, condition (3) implies the condition (4)~\cite[4.2.4]{Fresse:OCCCOHMAOO}. With the additional assumptions, Loday--Vallette show the converse~\cite[Theorem 11.3.3]{LodayVallette:AO}.
\end{proof}
\begin{remark}
Consider a Koszul morphism $\alpha : \Omega \Cc \to \Pp$. We have the following diagram at the level of model categories
\[
\xymatrix{
\Pp\alg \ar@<1.4ex>[rr]^(.49){B_\alpha}
&\downvdash&
\Cc\coalg_\alpha \ar@<1.4ex>[ll]^(.51){\Omega_\alpha}
\ar@<1.4ex>[rr]^(.5){\id}
&\perp&
\Cc\coalg_\aug\ar@<1.4ex>[ll]^(.5){\id}.
}
\] 

There are several roughly equivalent ways to pass from a model category to an infinity category. One is to apply simplicial localization \cite{BarwickKan:RCMHTHT} followed by the homotopy coherent nerve $\mathscr{N}^{\textrm{hc}}$ \cite{Lurie:HTT}. If we apply this composite functor to our diagram, the leftmost and rightmost entry are well-known infinity categories:
\[
\xymatrix{
\mathscr{P}\textrm{-}\mathfrak{alg}
\ar@<1.4ex>[rr]^(.4){B_\alpha}
&\cong&
\mathscr{N}^{\textrm{hc}}\mathscr{L} \left(\Cc\coalg_\alpha \right)
 \ar@<1.4ex>[ll]^(.6){\Omega_\alpha}
\ar@<1.4ex>[rr]
&\perp&
\mathscr{C}\textrm{-}\mathfrak{coalg}_{\textrm{dp}}^{\textrm{nil}}
\ar@<1.4ex>[ll].
}
\] 

Here we use script lettering to denote infinity categories. We reemphasize on the right that we are working with conilpotent coalgebras and that when the characteristic is nonzero we get so-called the infinity category of divided-power coalgebras.

The composite $\infty$-functor $\mathscr{P}\textrm{-}\mathfrak{alg} \to \mathscr{C}\textrm{-}\mathfrak{coalg}_{\textrm{dp}}^{\textrm{nil}}$ is a left adjoint (as a composite of the equivalence $B_{\alpha}$ followed by a left adjoint) and can be identified with $\textrm{Bar}_{\Pp}^{\textrm{enh}}$, and similarly, the composite $\infty$-functor $\mathscr{C}\textrm{-}\mathfrak{coalg}_{\textrm{dp}}^{\textrm{nil}}$ is a right adjoint and can be identified with $\textrm{Cobar}_{\Pp}^{\textrm{enh}}$. 

The punchline is that the handedness of the dg bar and cobar constructions (for a Koszul morphism) is an artifact of Quillen equivalences coming in two kinds---left and right---while modeling equivalences of $\infty$-categories that have no such property. 
\end{remark}

\section{Examples}
The goal of this section is to provide simple, concrete examples of twisting morphisms inducing distinct model category structures on the same category of coalgebras.
\subsection{Coassociative coalgebras}
Let $\fieldk$ have characteristic zero. Consider the operad $\ass$~\cite[9.1]{LodayVallette:AO} governing associative algebras.\footnote{In most of chapter 9, this reference considers the nonsymmetric operad $As$ but essentially all the results hold with the symmetric operad $Ass$ replacing the nonsymmetric operad $As$.} Its linear dual $\ass^c$ is the cooperad governing coassociative coalgebras. We will use Theorem~\ref{thm:lift} to establish three distinct homotopy categories on ${\ass^c}\!\coalg$.

The homotopy theory of associative algebras is well-understood in terms of the Koszul twisting morphism 
\[\kappa:\Omega (\suspend \ass^c)\to \ass\]
where $\suspend$ is the operadic suspension. The Koszul twisting morphism $\kappa$ is determined by its image on generators. On generators, $\kappa$ takes the binary coproduct in $\ass^c$ to the binary product in $\ass$.

We will employ a desuspended version of this Koszul twisting morphism, also called $\kappa$:
\[\kappa:\Omega \ass^c \to \desuspend\ass.\]

The last sentence of Theorem~\ref{operadmodel} implies that because $\kappa$ is a Koszul twisting morphism, the $\kappa$-model structure on $\ass^c\!\coalg$ coincides with the initial $\id$-model structure. For ease we will use the former rather than the latter.

We will also consider the terminal $\aug$-model structure.

In addition, we will study a model structure arising from a twisting morphism $\beta$ constructed as follows (for more detail on the constituent ingredients of the construction see~\cite[13.1]{LodayVallette:AO}). 

There are maps of operads from $\lie\to\ass\to\com$. Here $\lie$ is the operad governing Lie algebras and $\com$ the operad governing commutative associative algebras. At the level of algebras, restriction along $\lie\to \ass$ gives the commutator Lie algebra of an associative algebra, while induction along $\ass\to\com$ is Abelianization. 

On the cooperad side, the linear dual map $\ass^c\to \lie^c$ induces a functor of the associated categories of coalgebras; a coassociative coalgebra can be skew-symmetrized to yield its {\em cocommutator Lie coalgebra}.

There is a Koszul twisting morphism, which we shall call $\kappa_\lie$, which makes the following square commute:
\[
\begin{tikzcd}
\Omega\ass^c\ar{r}{\kappa}\ar{d}&\desuspend\ass\ar{d}\\
\Omega\lie^c\ar{r}{\kappa_\lie}&\desuspend\com
\end{tikzcd}
\]
The third twisting morphism we will consider is the diagonal map of this square, which we will call $\beta$.

The three model structures on $\ass^c$-coalgebras induced by $id:\Omega \ass^c\to \Omega \ass^c$, $\beta:\Omega \ass^c\to \shiftedcom$, and $\aug:\Omega \ass^c\to \fieldk$ have the following weak equivalences.

\vspace{10pt}
\begin{tabular}{cp{7cm}}
twisting morphism & weak equivalences\\\hline
$\id$ or $\kappa$ & quasi-isomorphisms on coHochschild homology valued in the trivial comodule\\
$\beta$ & quasi-isomorphisms on coChevalley-Eilenberg homology of the cocommutator Lie coalgebra\\
$\aug$ & quasi-isomorphisms of coalgebras\\\hline
\end{tabular}
\vspace{10pt}

These three categories of weak equivalence are strictly nested. Let $\widetilde{C}$ denote the cofree conilpotent coalgebra on generators $x$ of degree $1$ and $y$ of degree $4$. In the following two examples, we have been lax about the signs involved in some differentials; the results are true no matter which signs are correct.
\begin{example}
Let $X$ be the subcoalgebra of $\widetilde{C}$ spanned by $x$ and $w\coloneqq x\otimes x$, equipped with the differential $w\mapsto x$. The inclusion of the zero coalgebra into $X$ is a quasi-isomorphism, so $X$ is equivalent to the zero coalgebra in the $\aug$-model structure. 

However, we shall see that $\Omega_\beta X$ has nontrivial homology, which will show that $X$ is not equivalent to the zero coalgebra in the $\beta$-model structure.

The underlying vector space of $\Omega_\beta X$ is $\desuspend\com X\cong \com(X[-1])[1]$. Let the desuspension be denoted by a tilde, then this is $\fieldk[\tilde{x},\tilde{w}][1]$, where $|\tilde{x}|=0$ and $|\tilde{w}|=3$. Thus
\begin{eqnarray*}
(\Omega_\beta X)_2&\cong& \tilde{w}\fieldk[\tilde{x}][1]
\\(\Omega_\beta X)_1&\cong& \fieldk[\tilde{x}][1]
\\(\Omega_\beta X)_0&\cong& 0.
\end{eqnarray*}
The differential takes $\tilde{w}\tilde{x}^n$ to $\pm 2\tilde{x}^{n+2} \pm \tilde{x}^{n+1}$. This implies that the element $\tilde{x}[1]$ is not in the span of the image of this differential and thus survives in homology.
\end{example}
\begin{example}
Let $C_1$ be the subcoalgebra of $\widetilde{C}$ spanned by $x$. Let $C_2$ be the subcoalgebra of $\widetilde{C}$ spanned by $x$, $y$, and $z\coloneqq x\otimes y + y\otimes x$, equipped with the differential $z\mapsto y$. We will consider the inclusion $C_1\to C_2$.

The cocommutator Lie coalgebras of both $C_1$ and $C_2$ have vanishing cobracket. Then (as in the previous example) $\Omega_\beta C_i$ is the free shifted commutative algebra on the underlying chain complex of $C_i$.  For any operad $\Pp$ satisfying our mild conditions (including $\desuspend\com$), the free $\Pp$-algebra functor is a left Quillen functor from $\chain$ to the model category of $\Pp$-algebras so it preserves trivial cofibrations, in particular taking them to quasi-isomorphisms. In chain complexes, the inclusion $C_1\to C_2$ is a trivial cofibration, and thus $\Omega_\beta C_1\to \Omega_\beta C_2$ is a quasi-isomorphism and the inclusion $C_1\to C_2$ is a weak equivalence in the $\beta$-model structure.

This fact can also be checked by hand or one could note that on the cocommutator Lie coalgebra, the inclusion is a filtered quasi-isomorphism (see~\cite{Vallette:HTHA}) and thus becomes a quasi-isomorphism under the Koszul twisting morphism $\lie^c\to \desuspend\com$. 

On the other hand, we shall see that applying $\Omega_{\kappa}$ to the inclusion $C_1\to C_2$ yields algebras with different homology. In particular, $\Omega_{\kappa}C_1$, the free $\desuspend\ass$-algebra on one generator in degree $1$, is concentrated in degree $1$. 

Now consider $\Omega_{\kappa}C_2$. The underlying vector space of this $\desuspend\ass$-algebra can be written as $T(C_2[-1])[1]$, where $T$ denotes the tensor algebra, so that
\begin{eqnarray*}
(\Omega_\kappa C_2)_5&\cong& \langle \tilde{x}^m\tilde{z}\tilde{x}^n[1]\rangle
\\(\Omega_\kappa C_2)_4&\cong& \langle \tilde{x}^m\tilde{y}\tilde{x}^n[1]\rangle
\\(\Omega_\kappa C_2)_3&\cong& 0
\end{eqnarray*}
The differential takes $\tilde{x}^m\tilde{z}\tilde{x}^n[1]$ to
$\pm \tilde{x}^m\tilde{y}\tilde{x}^n[1] \pm \tilde{x}^{m+1}\tilde{y}\tilde{x}^n[1]\pm \tilde{x}^m\tilde{y}\tilde{x}^{n+1}[1]$. The element $\tilde{y}[1]$ is not in the span of the image of this differential and thus survives in homology.
\end{example}

\subsection{An elementary example in comodules}\label{subsection:comodules}
In this subsection, let all cooperads and operads be concentrated in arity one. Then a (co)augmented (co)operad can be identified with a (co)unital (co)associative (co)algebra. All of the constructions we have used have classical cognates under this identification. A twisting morphism $\alpha:\Omega\Cc\to \Pp$ is called a \emph{twisting cochain} between the coalgebra $\Cc$ and the algebra $\Pp$. In this setting a ``(co)algebra'' over such a (co)operad is a (co)module over the (co)algebra. The adjoint functors $\Omega_\alpha$ and $B_\alpha$ induced by the twisting morphism $\alpha$ are the classical \emph{one-sided (co)bar} functors between $\Cc$-comodules and $\Pp$-modules. Theorem~\ref{thm:lift} applies just as well in this special case to construct a model category structure on $\Cc$-comodules. We include an example in this special case because the category of $\Cc$-comodules may be of general interest. 

Our example will exploit a fact about {\em completion functors}. Such functors accept a natural transformation from the identitity that is not generally a quasi-isomorphism. However, in some cases this natural transformation becomes a quasi-isomorphism under the appropriate bar functor. Morally, this is because a bar functor encodes derived indecomposables and completion does not affect indecomposables.

Consider the free counital coalgebra on variables $\mu$ and $\nu$ in degree $1$, and let $\Cc$ be the subcoalgebra spanned by $1$, $\mu$, $\nu$, and $\eta\coloneqq\mu\otimes \nu-\nu\otimes \mu$. This is a coalgebraic version of the exterior algebra on $\mu$ and $\nu$. We will employ the twisting morphisms determined by the following data:
\begin{eqnarray*}
\Omega\Cc\xrightarrow{\kappa} \fieldk[x,y]&\text{via}& s^{-1}\mu\mapsto x,\quad s^{-1}\nu\mapsto y;\\
\Omega\Cc\xrightarrow{\alpha} \fieldk[x]&\text{via}& s^{-1}\mu\mapsto x,\quad s^{-1}\nu\mapsto 0;\\
\Omega\Cc\xrightarrow{\aug} \fieldk&\text{via}& s^{-1}\mu\mapsto 0,\quad s^{-1}\nu\mapsto 0.
\end{eqnarray*}
Here, as in the case of coassociative coalgebras, $\kappa$ accepts a quasi-isomorphism from the initial twisting morphism $\id$ and can serve as a replacement for it.

We can use $B_\kappa$ to construct maps of comodules that demonstrate the difference among the corresponding three classes of weak equivalences.
\begin{example}
For $z$ an indeterminate, let $S(z)$ and $\widehat{S}(z)$ denote the kernel of the evaluation at zero $\fieldk[z]\to \fieldk$ and $\fieldk[[z]]\to \fieldk$, respectively. That is, these are non-unital polynomial and power series algebras in a single variable.

We can make $S(y)$ and $\widehat{S}(y)$ into $\fieldk[x,y]$-modules by letting $x$ act as $0$. Consider the map of comodules $B_\kappa S(y)\to B_\kappa \widehat{S}(y)$. This map of comodules is not a $\kappa$-weak equivalence because that would imply that $S(y)\to \widehat{S}(y)$ was a weak equivalence:
\[
\begin{tikzcd}
\Omega_\kappa B_\kappa S(y) \ar{r}\ar{d}{\simeq}
&
\Omega_\kappa B_\kappa \widehat{S}(y)\ar{d}{\simeq}
\\
S(y)\ar{r}&\widehat{S}(y).
\end{tikzcd}
\]
On the other hand, for any free $\fieldk[y]$-module $M$ concentrated in degree zero, the $\fieldk[x]$-module $\Omega_\alpha B_\kappa M$ is spanned by elements of the form $p\otimes q\otimes m$ where $p\in \fieldk[x]$, $q\in \Cc$, and $m\in M$. The differential is of the form:
\begin{eqnarray*}
d (p\otimes \eta\otimes m) &=& px\otimes \nu\otimes m - p\otimes \mu\otimes ym\\
d (p\otimes \mu\otimes m) &=& px \otimes 1\otimes m\\
d (p\otimes \nu\otimes m) &=& p\otimes 1\otimes ym.
\end{eqnarray*}
The homology of this $\fieldk[x]$-module is concentrated in degree zero, and is spanned by $1\otimes 1\otimes M/yM$. Thus the map $\Omega_\alpha B_\kappa S(y)\to \Omega_\alpha B_\kappa \widehat{S}(y)$ is an isomorphism on homology and so the map of comodules $B_\kappa S(y)\to B_\kappa \widehat{S}(y)$ is an $\alpha$-weak equivalence.
\end{example}
Essentially the same argument shows that the map $B_\kappa S(x)\to B_\kappa \widehat{S}(x)$ is a $\aug$-weak equivalence which is not an $\alpha$-weak equivalence.

\section{Proof of the main theorem}\label{sec:proof}
Recall that $\alpha$ is a twisting morphism $\Omega\Cc\xrightarrow{\alpha}\Pp$ which induces the adjunction
\[
\xymatrix{
\Cc\coalg_\alpha \ar@<1.4ex>[rr]^(.55){\Omega_\alpha}
&\perp&
\Pp\alg \ar@<1.4ex>[ll]^(.45){B_\alpha}.
}
\] 

In order to prove Theorem~\ref{thm:lift}, we will left induce along $\Omega_\alpha$ to get a model category structure on $\Cc$-coalgebras where the weak equivalences and cofibrations are created by $\Omega_\alpha$. 

Bayeh--Hess--Karpova--K\k{e}dziorek--Riehl--Shipley~\cite{BayehHessKarpovaKedziorekRiehlShipley:LIMSDC}, using a theorem of Makkai--Rosick\'y~\cite{MakkaiRosicky:CC}, give criteria for when this left induction is possible. 

We note the following elementary variation on Corollary~{2.21} in~\cite{BayehHessKarpovaKedziorekRiehlShipley:LIMSDC}.

\begin{prop}\label{prop:leftinduce}
Let $M$ be a locally presentable cofibrantly generated model category and $U:K\to M$ be a left adjoint whose domain is locally presentable. Define $U$-cofibrations and $U$-weak equivalences in $K$ to be created by $U$.

If there is a factorization of every morphism in $K$ into a $U$-cofibration followed by a $U$-weak equivalence, then this structure makes $K$ into a model category.
\end{prop}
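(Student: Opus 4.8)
The plan is to verify the hypotheses of the left-induction machinery behind Corollary~2.21 of~\cite{BayehHessKarpovaKedziorekRiehlShipley:LIMSDC}, isolating the single nontrivial point---an \emph{acyclicity condition}---and deriving it from the stated factorization hypothesis. Write $\mathcal{L}$, $\mathcal{W}$, and $\mathcal{F}$ for the classes of $U$-cofibrations, $U$-weak equivalences, and $U$-fibrations in $K$, and let $\mathcal{T}$ denote the class of maps with the right lifting property against every map in $\mathcal{L}$. Since $\mathcal{W}=U^{-1}(\text{weak equivalences of }M)$ and $U$ is a functor, $\mathcal{W}$ satisfies two-out-of-three and is closed under retracts, inheriting both from the weak equivalences of $M$. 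Because $K$ is locally presentable it is complete and cocomplete, so it remains to produce the two weak factorization systems of a model structure and to check their compatibility with $\mathcal{W}$.

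First I would invoke the accessibility part of the Makkai--Rosick\'y argument underlying~\cite{BayehHessKarpovaKedziorekRiehlShipley:LIMSDC}. Since $K$ is locally presentable, $U$ is a left adjoint (hence accessible and cocontinuous), and $M$ is combinatorial, the preimages $\mathcal{L}=U^{-1}(\text{cofibrations of }M)$ and $\mathcal{L}\cap\mathcal{W}=U^{-1}(\text{trivial cofibrations of }M)$ are the left classes of two (weakly cofibrantly generated) weak factorization systems $(\mathcal{L},\mathcal{T})$ and $(\mathcal{L}\cap\mathcal{W},\mathcal{F})$ on $K$. This part is unconditional; in particular $\mathcal{F}$ is exactly the class of maps with the right lifting property against acyclic $U$-cofibrations, matching the intended fibrations of Definition~\ref{model_class}. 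What is not yet guaranteed is that these two systems fit together into a model structure, which by the standard recognition principle---a class $\mathcal{W}$ with two-out-of-three together with weak factorization systems $(\mathcal{L},\mathcal{F}\cap\mathcal{W})$ and $(\mathcal{L}\cap\mathcal{W},\mathcal{F})$---reduces to the single identity $\mathcal{T}=\mathcal{F}\cap\mathcal{W}$.

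The crux, and the place where the factorization hypothesis enters, is the acyclicity inclusion $\mathcal{T}\subseteq\mathcal{W}$. Here is the argument I would give. Let $p\colon X\to Y$ lie in $\mathcal{T}$, and use the hypothesis to factor $p=w\circ j$ with $j\colon X\to Z$ a $U$-cofibration and $w\colon Z\to Y$ a $U$-weak equivalence. Since $p\in\mathcal{T}$ and $j\in\mathcal{L}$, the commuting square with top edge $\id_X$, left edge $j$, right edge $p$, and bottom edge $w$ admits a lift $\ell\colon Z\to X$ with $\ell j=\id_X$ and $p\ell=w$. These two identities exhibit $p$ as a retract of $w$ in the arrow category of $K$, with retraction data $j,\ell$ on domains and $\id_Y$ on codomains. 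As $\mathcal{W}$ is closed under retracts and $w\in\mathcal{W}$, we conclude $p\in\mathcal{W}$. Since moreover $\mathcal{L}\cap\mathcal{W}\subseteq\mathcal{L}$ gives $\mathcal{T}\subseteq\mathcal{F}$, we obtain $\mathcal{T}\subseteq\mathcal{F}\cap\mathcal{W}$.

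The reverse inclusion is the usual retract argument: given $g\in\mathcal{F}\cap\mathcal{W}$, factor $g=p\circ i$ through $(\mathcal{L},\mathcal{T})$ with $i\in\mathcal{L}$ and $p\in\mathcal{T}$; by the previous paragraph $p\in\mathcal{W}$, so two-out-of-three forces $i\in\mathcal{L}\cap\mathcal{W}$, whence $g\in\mathcal{F}$ has the right lifting property against $i$ and is therefore a retract of $p\in\mathcal{T}$, giving $g\in\mathcal{T}$. This establishes $\mathcal{T}=\mathcal{F}\cap\mathcal{W}$, and the recognition principle assembles $(\mathcal{L},\mathcal{W},\mathcal{F})$ into the desired model structure. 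I expect the apparent obstacle to be the acyclicity condition, but it dissolves into the one-line retract observation above; the genuine substance is the unconditional existence of the two weak factorization systems, where local presentability and the Makkai--Rosick\'y theorem are doing the heavy lifting behind the citation to~\cite{BayehHessKarpovaKedziorekRiehlShipley:LIMSDC}.
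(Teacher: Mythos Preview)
Your proof is correct and takes essentially the same approach as the paper: both establish the acyclicity condition via the identical retract argument (factor $p$ as $w\circ j$, lift $j$ against $p$, exhibit $p$ as a retract of $w\in\mathcal{W}$), and both rely on the Makkai--Rosick\'y machinery behind~\cite{BayehHessKarpovaKedziorekRiehlShipley:LIMSDC} for the rest. The only difference is packaging: the paper cites Theorem~2.23 of that reference directly once acyclicity is shown, whereas you unpack the content of that theorem by explicitly invoking the weak factorization systems and verifying $\mathcal{T}=\mathcal{F}\cap\mathcal{W}$ via the recognition principle.
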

\begin{proof}
Suppose a morphism $f:A\to C$ in $K$ lifts against all $U$-cofibrations. Factorize $f$ into a $U$-cofibration followed by a weak equivalence. Then $f$ fits into the following square:

\begin{center}
\begin{tikzcd}
A\ar[tail]{d}\ar[-, double equal sign distance]{r} & A\ar{d}{f} \\
B\ar{r}[swap]{\sim}\ar[dashed]{ur} & C.
\end{tikzcd}
\end{center}

Now the morphism $f$ is a retract of the $U$-weak equivalence $B\to C$. Then since $U$-weak equivalences are closed under retracts, $f$ is a $U$-weak equivalence. 

The condition that any map which lifts against all $U$-cofibrations be a $U$-weak equivalence is precisely the acyclicity condition of Theorem~2.23 of~\cite{BayehHessKarpovaKedziorekRiehlShipley:LIMSDC}. By that theorem, the left induced structure makes $K$ into a model category.
\end{proof}
\begin{lemma}\label{lemma:locallypresentable}
The categories of $\Pp$-algebras and $\Cc$-coalgebras are locally presentable.
\end{lemma}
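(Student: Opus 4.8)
The plan is to present each category as the category of (co)algebras over an accessible (co)monad on $\chain$ and then to invoke the standard presentability criteria. The common starting point is that $\chain$ is itself locally presentable: it is a Grothendieck abelian category (equivalently, graded $\fieldk$-modules equipped with a square-zero differential), hence locally finitely presentable, and the same holds for the bounded-below truncations of Remark~\ref{rem:bounded}.

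For $\Pp$-algebras, I would identify $\Pp\alg$ with the Eilenberg--Moore category of the free $\Pp$-algebra monad $T_\Pp$, where $T_\Pp(V)=\bigoplus_{n\ge 0}(\Pp(n)\otimes V^{\otimes n})_{\Sigma_n}$, and then check that $T_\Pp$ preserves filtered colimits. The $n$-fold tensor power is finitary (finite tensor powers commute with filtered colimits in a locally presentable closed symmetric monoidal category); tensoring with the fixed complex $\Pp(n)$ and passing to $\Sigma_n$-coinvariants are cocontinuous; and the outer direct sum is a coproduct, hence also preserves filtered colimits. Thus $T_\Pp$ is finitary, in particular accessible, and the category of $T_\Pp$-algebras is locally presentable by the standard theorem on algebras for an accessible monad on a locally presentable category~\cite{AdamekRosicky:LPAC}.

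For $\Cc$-coalgebras I would use the dual description, realizing the conilpotent $\Cc$-coalgebras as the coalgebras over the comonad $S_\Cc$ on $\chain$ given by the Schur functor $S_\Cc(V)=\bigoplus_{n\ge 1}(\Cc(n)\otimes V^{\otimes n})^{\Sigma_n}$, with comultiplication induced by the cocomposition of $\Cc$ (here the use of the direct sum in place of the product is precisely the conilpotence condition). The same bookkeeping as in the algebra case shows that $S_\Cc$ preserves filtered colimits, the one new ingredient being that $\Sigma_n$-invariants is a finite limit and that finite limits commute with filtered colimits. With $\chain$ locally presentable and $S_\Cc$ an accessible comonad, I would then conclude that the category of $S_\Cc$-coalgebras, namely $\Cc\coalg$, is locally presentable by~\cite[A.1]{ChingRiehl:CMCMC}, which follows the arguments of~\cite{AdamekRosicky:LPAC}.

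I expect the coalgebra case to be the main obstacle, and within it the accessibility of $S_\Cc$. Cocompleteness of a category of comonad-coalgebras is automatic, since the forgetful functor to $\chain$ creates colimits; the real content is accessibility, which rests on interchanging $\Sigma_n$-invariants with filtered colimits and on using the direct sum so that the assembly over all arities stays finitary. In positive characteristic one must keep invariants and coinvariants genuinely distinct, but since a finite limit and a coproduct each commute with filtered colimits the argument is unaffected, and the bounded-below case is handled identically.
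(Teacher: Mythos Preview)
Your proposal is correct and follows essentially the same approach as the paper: both arguments identify the categories as (co)algebras over an accessible (co)monad on $\chain$ whose underlying endofunctor is a Schur-type functor preserving filtered colimits, and then invoke \cite{AdamekRosicky:LPAC} for algebras and \cite[A.1]{ChingRiehl:CMCMC} for coalgebras. The only cosmetic differences are that the paper observes preservation of sifted (hence filtered) colimits in one line rather than decomposing the verification, and writes the underlying functor uniformly with coinvariants rather than invariants.
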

\begin{proof}
The categories in question are respectively monadic and comonadic for a (co)monad whose underlying functor is of the form ${\displaystyle V \mapsto \bigoplus_{n \geq 0} M(n) \otimes_{\mathbb{S}_n} V^{\otimes n} }$ for some $\mathbb{S}$-module $M$. Such functors preserve sifted colimits, hence preserve filtered colimits, and hence are accessible. It follows from \cite[Corollary 2.47, Theorem 2.78]{AdamekRosicky:LPAC} that the category of $\Pp$-algebras is locally presentable and \cite[Proposition A.1]{ChingRiehl:CMCMC} that the category of $\Cc$-coalgebras is locally presentable.
\end{proof}

\begin{proof}[Proof of Theorem~\ref{thm:lift}]
We will use two results of Vallette~\cite{Vallette:HTHA}. Explicitly, he works in characteristic zero with a Koszul twisting morphism of the form $\Omega B \Pp\to \Pp$ for an operad $\Pp$, but working in arbitrary characteristic and with the Koszul twisting morphism $\id:\Omega \Cc\to \Omega \Cc$ for a cooperad $\Cc$ does not change his arguments.

Vallette constructs a factorization~\cite[2.5]{Vallette:HTHA} of an arbitrary map in $\Cc\coalg$ into a $\Omega_{\id}$-cofibration followed by an $\Omega_{\id}$-weak equivalence. Since induction is a left Quillen functor from $\Omega \Cc$-algebras to $\Pp$-algebras, it preserves cofibrations and weak equivalences between cofibrant objects. The image of a $\Cc$-coalgebra under $\Omega_{\id}$ in $\Omega \Cc$-algebras is always cofibrant. Since $\Omega_\alpha$ is the same as $\Omega_\id$ followed by induction, Vallette's factorization is in fact a factorization into an $\Omega_\alpha$-cofibration followed by an $\Omega_\alpha$-weak equivalence for any $\alpha$. This gives us the hypotheses of Proposition~\ref{prop:leftinduce}.

It remains only to show that cofibrations in the $\alpha$-model structure are created by forgetting to $\chain$. Let $f$ be a map of twisting morphisms $\alpha\to\beta$. Because induction along $f$ is a left Quillen functor, it preserves cofibrations, so the $\Omega_\alpha$-cofibrations are contained in the $\Omega_\beta$-cofibrations. Since $\id$ is initial and $\aug$ is terminal in $\twist_\Cc$, the $\Omega_\alpha$-cofibrations contain the $\Omega_\id$-cofibrations and are contained in the $\Omega_\aug$-cofibrations. Since $\Omega_\aug$ is literally the forgetful functor to $\chain$, it then suffices to show that any $\Cc\up{-coalg}$ map which becomes a cofibration under the forgetful functor to $\chain$ also becomes a cofibration under $\Omega_\id$. This is also shown by Vallette~\cite[Theorem~2.9, item~1]{Vallette:HTHA}. 
\end{proof}
\begin{remark}\label{rem:boundedVallette}
Vallette's construction is built using a factorization in unbounded algebras into a cofibration followed by a trivial fibration. To extend to the bounded case (see Remark~\ref{rem:bounded} and Remark~\ref{rem:boundedHinich}) we must argue that we can choose this factorization such that the middle algebra is in our bounded category. We instead use the factorization from the model category structure on bounded algebras. While there are bounded fibrations which are not unbounded fibrations, it is easy to check that bounded trivial fibrations are always unbounded fibrations.
\end{remark}

\begin{remark}
There are classical criteria for lifting model category structures along {\em right} adjoints roughly parallel to those of Proposition~\ref{prop:leftinduce}. The authors wondered whether it would be possible to dualize Theorem~\ref{thm:lift} and lift the $\Omega_{\aug}$ model structure on $\Cc$-coalgebras along the right adjoint $B_{\alpha}$ to create a model structure on $\Pp$-algebras where the weak equivalences and fibrations were created by $B_\alpha$. We have the model structure on $\Pp$-algebras that we expect to have the most restrictive weak equivalences (namely the Hinich model structure) just as we had in the Vallette model structure on $\Cc$-coalgebras. 

However, naively dualizing the argument of Theorem~\ref{thm:lift} does not work. In that proof, we use the induction functor, which is a left Quillen functor, to show that our factorization into an $\Omega_\id$ cofibration followed by an $\Omega_\id$ weak equivalence is also the same kind of factorization for $\Omega_\alpha$. This works because the image of $\Omega_\alpha$ is always $\Omega_\id$-cofibrant. In contrast, the image of $B_\alpha$ is not necessarily $B_\aug$-fibrant. In fact the coinduction functor that we would use in the dual goes ``the wrong way'' from $\Cc$-coalgebras to chain complexes. 
\end{remark}
\bibliographystyle{amsalpha} 
\bibliography{references-2015}

\newcommand{\etalchar}[1]{$^{#1}$}
\providecommand{\bysame}{\leavevmode\hbox to3em{\hrulefill}\thinspace}
\providecommand{\MR}{\relax\ifhmode\unskip\space\fi MR }
\providecommand{\MRhref}[2]{%
  \href{http://www.ams.org/mathscinet-getitem?mr=#1}{#2}
}
\providecommand{\href}[2]{#2}
\begin{thebibliography}{BHK{\etalchar{+}}14}

\bibitem[AC03]{AubryChataur:CCCMC}
Marc Aubry and David Chataur, \emph{Cooperads and coalgebras as closed model
  categories}, J. Pure Appl. Algebra \textbf{180} (2003), no.~1--2, 1--23.

\bibitem[AF14]{AyalaFrancis:PKD}
David Ayala and John Francis, \emph{Poincar\'e/{K}oszul duality}, Preprint,
  \url{http://arxiv.org/abs/1409.2478}, 2014.

\bibitem[Ago11]{Agore:LCBHA}
A.~L. Agore, \emph{Limits of coalgebras, bialgebras and {H}opf algebras}, Proc.
  Amer. Math. Soc. \textbf{139} (2011), 855--863.

\bibitem[AR94]{AdamekRosicky:LPAC}
Ji\v{r}\'i Ad\'amek and Ji\v{r}\'i Rosick\'y, \emph{Locally presentable and
  accessible categories}, London Math. Soc. Lecture Note Ser., vol. 189,
  Cambridge University Press, 1994.

\bibitem[BHK{\etalchar{+}}14]{BayehHessKarpovaKedziorekRiehlShipley:LIMSDC}
Marzieh Bayeh, Kathryn Hess, Varvara Karpova, Magdalena K\k{e}dziorek, Emily
  Riehl, and Brooke Shipley, \emph{Left-induced model structures and diagram
  categories}, to appear in the Proceedings of the Women in Topology Workshop,
  2014.

\bibitem[BK12]{BarwickKan:RCMHTHT}
C.~Barwick and D.\~M. Kan, \emph{Relative categories: another model for the
  homotopy theory of homotopy theories}, Indag. Math. (N.S.) \textbf{23}
  (2012).

\bibitem[Cis06]{CisinskiPCMTH}
Denis-Charles Cisinski, \emph{Les pr{\'e}faisceaux comme mod{\`e}les des types
  d'homotopie}, Ast\'erisque \textbf{308} (2006), 392pp.

\bibitem[CR14]{ChingRiehl:CMCMC}
Michael Ching and Emily Riehl, \emph{Coalgebraic models for combinatorial model
  categories}, Preprint, \url{http://arxiv.org/abs/1403.5303}, 2014.

\bibitem[Cra95]{Crans:QCMSS}
Sjoerd~E. Crans, \emph{Quillen closed model structures for sheaves}, J. Pure
  Appl. Algebra \textbf{101} (1995), no.~1, 35--57.

\bibitem[Fre09]{Fresse:OCCCOHMAOO}
Benoit Fresse, \emph{Operadic cobar constructions, cylinder objects and
  homotopy morphisms of algebras over operads}, Alpine perspectives on
  algebraic topology (Arolla 2008), Contemp. Math., vol. 504, Amer. Math. Soc.,
  2009, pp.~125--189.

\bibitem[GG99]{GetzlerGoerss:MCSDGC}
Ezra Getzler and Paul Goerss, \emph{A model category structure for differential
  graded coalgebras}, Preprint,
  \url{http://www.math.northwestern.edu/~pgoerss/papers/model.ps}, 1999.

\bibitem[Hin97]{Hinich:HAHA}
Vladimir Hinich, \emph{Homological algebra of homotopy algebras}, Comm. Algebra
  \textbf{25} (1997), no.~10, 3291--3323.

\bibitem[Hin01]{Hinich:DGCFS}
\bysame, \emph{{DG} coalgebras as formal stacks}, J. Pure Appl. Algebra
  \textbf{162} (2001), no.~2--3, 209--250.

\bibitem[HMS74]{HusemollerMooreStasheff:DHAHS}
Dale Husemoller, John~C. Moore, and James Stasheff, \emph{Differential
  homological algebra and homogeneous spaces}, J. Pure Appl. Algebra \textbf{5}
  (1974), no.~2, 113--185.

\bibitem[HS14]{HessShipley:HTCOC}
Kathryn Hess and Brooke Shipley, \emph{The homotopy theory of coalgebras over a
  comonad}, Proc. Lond. Math. Soc. (3) \textbf{108} (2014), no.~2, 484--516.

\bibitem[LH03]{LefevreHasegawa:SAIC}
Kenji Lef\`evre-Hasegawa, \emph{Sur les {A}-infini cat\'egories}, Unpublished
  PhD thesis, 2003.

\bibitem[Lur03]{Lurie:HA}
Jacob Lurie, \emph{Higher algebra}, Preprint,
  \url{http://www.math.harvard.edu/~lurie/papers/higheralgebra.pdf}, 2003.

\bibitem[Lur09]{Lurie:HTT}
\bysame, \emph{Higher topos theory}, Ann. of Math. Stud., Princeton University
  Press, 2009.

\bibitem[LV12]{LodayVallette:AO}
Jean-Louis Loday and Bruno Vallette, \emph{Algebraic operads}, Grundlehren
  Math. Wiss., vol. 346, Springer Verlag, 2012.

\bibitem[MR13]{MakkaiRosicky:CC}
Michael Makkai and Ji\v{r}\'i Rosick\'y, \emph{Cellular categories}, Preprint,
  \url{http://arxiv.org/abs/1304.7572}, 2013.

\bibitem[Por13]{Porta:MSCC}
Mauro Porta, \emph{Model structure for cooperads and for coalgebras},
  mathoverflow question, \url{http://mathoverflow.net/questions/148626}, 2013.

\bibitem[Pos00]{Positselski:TKDCKDCCC}
Leonid Positselski, \emph{Two kinds of derived categories, {K}oszul duality,
  and comodule-contramodule correspondence}, Mem. Amer. Math. Soc. \textbf{212}
  (2000), 133p.

\bibitem[Qui69]{Quillen:RHT}
Daniel Quillen, \emph{Rational homotopy theory}, Ann. of Math. (2) \textbf{90}
  (1969), 205--295.

\bibitem[Smi11]{Smith:MCCOC}
Justin~R. Smith, \emph{Model-categories of coalgebras over operads}, Theory
  Appl. Categ. \textbf{25} (2011), no.~8, 189--246.

\bibitem[Val14]{Vallette:HTHA}
Bruno Vallette, \emph{Homotopy theory of homotopy algebras}, Preprint,
  \url{http://arxiv.org/abs/1411.5533}, 2014.

\end{thebibliography}
\end{document}